\documentclass[oneside,11pt,reqno]{amsart}
\usepackage{amssymb,amsmath,amsthm,bbm,enumerate,mdwlist,url,multirow,hyperref,amsthm}
\usepackage[pdftex]{graphicx}
\usepackage[shortlabels]{enumitem}

\addtolength{\hoffset}{-1.5cm}
\textwidth 16.5cm
\addtolength{\textheight}{2cm}
\topmargin -0.4cm
\sloppy

\linespread{1.3}
%%%%%%%%%%%%%%%%%%%%%%%%%%%%%%%%%%%%%%%%%%%%%%%%%%%%%%%%%%%%%%%%%%%%%%%%%%%%%%%%%%%%%%%%%%%%%%%%%%%%%%%%%%%%%%
\theoremstyle{definition}
\newtheorem{definition}{Definition}%Extra square-bracket argument achives that the numbering is the same as for definition (single uniform counter). 
\theoremstyle{theorem}

\newtheorem{theorem}[definition]{Theorem}

\numberwithin{equation}{section}
%\numberwithin{definition}{section}
\theoremstyle{remark}
\newtheorem{remark}[definition]{Remark}
\newtheorem{example}[definition]{Example}
%%%%%%%%%%%%%%%%%%%%%%%%%%%%%%%%%%%%%%%%%%%%%%%%%%%%%%%%%%%%%%%%%%%%%%%%%%%%%%%%%%%%%%%%%%%%%%%%%%%%%%%%%%%%%%
%Auxiliary notation.
\def\PP{\mathsf P}
\def\pr{\mathrm{pr}}
\def\QQ{\mathsf Q}
\def\EE{\mathsf E}
\def\GG{\mathcal G}
\def\AA{\mathcal A}
\def\HH{\mathcal H}
\def\FF{\mathcal F}
\def\BB{\mathcal B}
\def\sgn{\mathrm{sgn}}
%%%%%%%%%%%%%%%%%%%%%%%%%%%%%%%%%%%%%%%%%%%%%%%%%%%%%%%%%%%%%%%%%%%%%%%%%%%%%%%%%%%%%%%%%%%%%%%%%%%%%%%%%%%%%%
\bibliographystyle{amsplain}
%%%%%%%%%%%%%%%%%%%%%%%%%%%%%%%%%%%%%%%%%%%%%%%%%%%%%%%%%%%%%%%%%%%%%%%%%%%%%%%%%%%%%%%%%%%%%%%%%%%%%%%%%%%%%%
\begin{document}
\title{Observing a L\'evy process up to a stopping time}

\author{Matija Vidmar}
\address{Department of Mathematics, University of Ljubljana, and Institute of Mathematics, Physics and Mechanics, Slovenia}
\email{matija.vidmar@fmf.uni-lj.si}
\begin{abstract}
It is proved that the law of a possibly killed L\'evy process $X$, seen up to and including (resp.  up to strictly before) a stopping time, determines already the law of $X$ (resp. up to a compound Poisson component and killing). %A related, but much less definitive result obtains for Markov chains. 
%Suppose that (i) on observing two L\'evy processes in the same filtration up to, and including (resp. up to strictly before) a stopping time, we have seen the same processes in law, and (ii) the stopping time in question  is positive with a positive probability. Then the two L\'evy processes have the same law (resp. their laws coincide up to compound Poisson components). A similar statement is shown to hold true for killed L\'evy processes.
\end{abstract}

\thanks{Financial support from the Slovenian Research Agency is acknowledged (research core funding No. P1-0222). This research was conducted while the author was on a sabbatical at the University of Bath; he is grateful for its kind hospitality. The author also thanks Jean Bertoin and Jon Warren for  useful discussions on the topic of this paper.}

\keywords{L\'evy processes; stopping times; equality in law; Markov property}

\subjclass[2010]{60G51} 

\maketitle

\section{Introduction}

We fix a $d\in \mathbb{N}$ --- the dimension of the Euclidean space $\mathbb{R}^d$ in which our L\'evy processes will live --- and a $\partial\notin \mathbb{R}^d$ -- it will play the role of a cemetery state. We agree that for $x\in \mathbb{R}^d\cup \{\partial\}$, $\partial\pm x$ and $x\pm \partial$ are all equal to $\partial$. 

Recall then that a stochastic process $X=(X_t)_{t\in [0,\infty)}$ on a probability space $(\Omega,\GG,\PP)$ is a possibly killed $\mathbb{R}^d$-valued L\'evy process in the filtration $\FF=(\FF_t)_{t\in [0,\infty)}$ satisfying $\FF_\infty\subset \GG$, if the following holds: 
%\begin{enumerate}
%\item 
(1) $X$ takes values in $(\mathbb{R}^d\cup \{\partial\},\mathcal{B}_{\mathbb{R}^d}\lor  \sigma(\{\partial\}))$ and is $\FF$-adapted;
%\item 
(2) $X=\partial$ on $[\zeta,\infty)$, where  $\zeta:=\inf\{t\in (0,\infty):X_t=\partial\}$ is the lifetime of $X$;
%\item 
(3) $X$ has paths that are right-continuous and have left limits on $[0,\zeta)$; 
%\item 
(4) $\PP$-a.s. $X_0=0$ and $\zeta>0$; 
and
%\item 
(5) $\PP[g(X_t-X_s)\mathbbm{1}_{\{s<\zeta\}}\vert \FF_s]=\mathbbm{1}_{\{s<\zeta\}}\PP[g(X_{t-s})]$ a.s.-$\PP$ for all real $0\leq s\leq t$ and all $g\in \mathcal{B}_{\mathbb{R}^d}/\mathcal{B}_{[0,\infty]}$ extended by $0$ on $\{\partial\}$.\footnote{Throughout we will write $\QQ[W]$ for $\EE_\QQ[W]$, $\QQ[W;A]$ for $\EE_\QQ[W\mathbbm{1}_A]$, $\QQ[W\vert \mathcal{H}]$ for $\EE_\QQ[W\vert \mathcal{H}]$, and $Z_\star \QQ$ for the law of $Z$ under $\QQ$ w.r.t. a $\sigma$-field on the codomain that will be clear from context or made explicit. For $\sigma$-fields $\AA$ and $\BB$, $\AA/\BB$ will denote the set of $\AA/\BB$-measurable maps; $\mathcal{B}_A$ is the Borel (under the standard topology) $\sigma$-field on $A$.}
%\end{enumerate}
When these conditions prevail, then in fact for any $\FF$-stopping time $T$ with $\PP(T<\zeta)>0$, under the measure $\PP(\cdot \vert T<\zeta)$, the process $\Delta_TX:=(X_{T+t}-X_T)_{t\in [0,\infty)}$ is independent of $\FF_T\vert_{\{T<\zeta\}}$ and has the same distribution as does $X$ under $\PP$. This is known as the strong Markov property of $X$. In particular there exists a necessarily unique $q\in [0,\infty)$ with $\zeta\sim_\PP\mathrm{Exp}(q)$. 

Furthermore, $X$ is called simply an $\mathbb{R}^d$-valued L\'evy process in $\FF$ if  $\zeta=\infty$ (corresponding to $q=0$). In the latter case, if $\mathsf{e}\in \mathcal{G}/\mathcal{B}_{[0,\infty]}$ is independent of $\FF_\infty$ and exponentially distributed (with strictly positive mean) under $\PP$, then $k_\mathsf{e}(X)$, the process $X$ killed at the time $\mathsf{e}$ (i.e. the process equal to $X$ on   $[0,\mathsf{e})$ and equal to $\partial$ on  $[\mathsf{e},\infty)$) is, in turn, a possibly killed L\'evy process in the progressive enlargement of $\FF$ by $\mathsf{e}$, i.e. in the smallest enlargement of $\FF$ that makes $\mathsf{e}$ a stopping time. Conversely, if we revert to $X$ being just a possibly killed L\'evy process, then there exists a unique law $\mathcal{L}$ of an $\mathbb{R}^d$-valued L\'evy process such that for all $t\in [0,\infty)$, $(X\vert_{[0,t]})_\star\PP(\cdot \vert t<\zeta)=(\xi\vert_{[0,t]})_\star \mathcal{L}$, where $\xi$ is the canonical process.

We refer the reader to \cite{sato} for further general theory and terminology concerning L\'evy processes (albeit without killing and in their natural filtrations). In particular, the reader will recall that, thanks to the stationary independent increments property, the one-dimensional distributions of a possibly killed L\'evy process determine already its law. 

Put differently, observing the laws of two possibly killed L\'evy processes $X^1$ and $X^2$ up to a (and even just at a given) strictly positive deterministic time, we are able to say whether or not $X^1$ and $X^2$ have the same law. The result of Theorem~\ref{theorem} below --- whose content was already described in informal terms in the abstract --- provides a non-obvious (cf. Examples~\ref{remark:essential-1} and~\ref{remark:essential-2}), though intuitively appealing complement to this observation, namely one in which a stopping time takes the role of a deterministic time. Remark~\ref{remark:markov} on p.~\pageref{remark:markov} will comment on the related case of continuous-time Markov chains. Finally, another motivation for the investigations --- and at the same time an application  --- of Theorem~\ref{theorem} is provided in Example~\ref{example} on p.~\pageref{example}. 

\section{Results and proofs}
Notation-wise, in the statement of the theorem to follow, for a process $Z=(Z_t)_{t\in T}$ on a probability space $(\Theta,\HH,\QQ)$, taking its values in $\mathbb{R}^d\cup \{\partial\}$, and defined temporally possibly only on some random subset $T$ of the time axis $[0,\infty)$, by  $Z_\star \QQ$ we mean the $\QQ$-law of the process $Z'$ that is equal to $Z$ on  $T$ and equal to some adjoined extra state $\uparrow$ on  $[0,\infty)\backslash T$, and we mean it on the space $((\mathbb{R}^d\cup \{\partial,\uparrow\})^{[0,\infty)},(\mathcal{B}_{\mathbb{R}^d}\lor \sigma(\{\partial\},\{\uparrow\}))^{\otimes [0,\infty)})$ [assuming of course $Z'$ is $\HH$-measurable w.r.t. the latter measurable structure]. Further, for laws $\mathcal{M}_1$, $\mathcal{M}_2$, $\mathcal{L}$ of $\mathbb{R}^d$-valued L\'evy processes, and for $q\in [0,\infty)$: (I) $\star$ denotes convolution of laws, viz. if $Y^1\sim_\QQ \mathcal{M}_1$ and $Y^2\sim_\QQ \mathcal{M}_2$, with $Y_1$ independent of $Y^2$ under $\QQ$, then $\mathcal{M}_1\star\mathcal{M}_2=(Y^1+Y^2)_\star\QQ$, and (II) $k_{q}$ is the operator of adding a killing at  rate $q$, viz. if $Y\sim_\QQ \mathcal{L}$ and $\mathsf{e}\sim_\QQ \mathrm{Exp}(q)$, $Y$ independent of $\mathsf{e}$ under $\QQ$, then $k_{q}(\mathcal{L})=(k_\mathsf{e}(Y))_\star \QQ$. 

Here is now the result of this note: 

\begin{theorem}\label{theorem}
For $i\in \{1,2\}$ let $X^i=(X_t^i)_{t\in [0,\infty)}$ be a possibly killed $\mathbb{R}^d$-valued L\'evy process, defined on a probability space $(\Omega^i,\GG^i,\PP^i)$ in the filtration $\mathcal{F}^i=(\FF_t^i)_{t\in [0,\infty)}$, and let $T^i$ be an $\FF^i$-stopping time with $\PP^i(T^i>0)>0$.
\begin{enumerate}[(i)]
\item \label{thm:i} If $(X^1\vert_{[0,T^1]\cap [0,\infty)})_\star\PP^1=(X^2\vert_{[0,T^2]\cap [0,\infty)})_\star \PP^2$, then ${X^1}_\star\PP^1={X^2}_\star\PP^2$.% have the same law under $\PP$. 
\item\label{thm:ii} If $(X^1\vert_{[0,T^1)})_\star\PP^1=(X^2\vert_{[0,T^2)})_\star \PP^2$, then there exist a law $\mathcal{L}$ of a  L\'evy process, laws $\mathcal{L}_1$ and $\mathcal{L}_2$ of compound Poisson processes (allowing the zero process), and $\{q_1,q_2\}\subset [0,\infty)$, such that $(X^i)_\star\PP^i=k_{q_i}(\mathcal{L}\star \mathcal{L}_i)$ for $i\in \{1,2\}$, i.e. ``the laws of $X^1$ and $X^2$ differ only modulo compound Poisson processes and killing''. 
\end{enumerate}
\end{theorem}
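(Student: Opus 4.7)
The plan is to derive, via the strong Markov property applied at $T^i$, a single functional identity linking the characteristic exponent $\psi^i$ and killing rate $q^i$ of $X^i$ to quantities computable from the observation. For $(q,\xi)\in(0,\infty)\times\mathbb{R}^d$, I will split the $q$-resolvent $U^i_q f(0)=1/(q+q^i+\psi^i(\xi))$ of $X^i$ applied to $f(x):=e^{i\langle\xi,x\rangle}\mathbbm{1}_{\{x\neq\partial\}}$ at $T^i$, obtaining the pre-$T^i$ contribution
\begin{equation*}
A^i(q,\xi):=\PP^i\!\left[\int_0^{T^i}e^{-qt}e^{i\langle\xi,X^i_t\rangle}\mathbbm{1}_{\{X^i_t\neq\partial\}}\,dt\right]
\end{equation*}
and the post-$T^i$ part, which by strong Markov and spatial homogeneity equals $\phi^i(q,\xi)/(q+q^i+\psi^i(\xi))$, where
\begin{equation*}
\phi^i(q,\xi):=\PP^i[e^{-qT^i}e^{i\langle\xi,X^i_{T^i}\rangle};\,T^i<\zeta^i].
\end{equation*}
Combining yields the master identity
\begin{equation*}
A^i(q,\xi)\bigl(q+q^i+\psi^i(\xi)\bigr)=1-\phi^i(q,\xi). \qquad(\star)
\end{equation*}

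For part \ref{thm:i}, both $A^i$ and $\phi^i$ are Borel functionals of $X^i|_{[0,T^i]}$---the closed-endpoint observation exposes $X^i_{T^i}$ and the event $\{T^i<\zeta^i\}$---hence determined by the common law. Inverting $(\star)$ recovers $q^i+\psi^i(\xi)$ for all $\xi$; evaluating at $\xi=0$ (where $\psi^i(0)=0$) then separates $q^i$ from $\psi^i$, pinning down the law of $X^i$ uniquely.

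For part \ref{thm:ii}, $A^i$ is still observable but $\phi^i$ is not, since only $X^i_{T^i-}$ (the left limit), not $X^i_{T^i}$, can be read off the open-endpoint observation. I introduce $\phi^i_-(q,\xi):=\PP^i[e^{-qT^i}e^{i\langle\xi,X^i_{T^i-}\rangle};T^i<\zeta^i]$, which \emph{is} observable, and the residual $D^i:=\phi^i-\phi^i_-$, which admits the representation $\int e^{i\langle\xi,x'\rangle}(e^{i\langle\xi,x\rangle}-1)\,\mu^i_q(dx',dx)$ for a sub-probability measure $\mu^i_q$ encoding the $e^{-qT^i}$-weighted joint law of $(X^i_{T^i-},\Delta X^i_{T^i})$ on $\{T^i<\zeta^i\}$. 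Under the hypothesis $A^1=A^2=:A$ and $\phi^1_-=\phi^2_-$, subtracting the two instances of $(\star)$ gives
\begin{equation*}
A(q,\xi)\bigl[(q^1+\psi^1(\xi))-(q^2+\psi^2(\xi))\bigr]=D^1(q,\xi)-D^2(q,\xi);
\end{equation*}
evaluating at $\xi=0$ (where $D^i=0$) forces $q^1=q^2$, while for $\xi\neq 0$ the uniform boundedness of the $D^i$ (by the finite mass of $\mu^i_q$) should force $\psi^1-\psi^2$ to be of the form $\int(1-e^{i\langle\xi,x\rangle})\,d\Lambda(x)$ for a finite signed measure $\Lambda$ on $\mathbb{R}^d\setminus\{0\}$---a signed-CPP exponent, by the L\'evy--Khintchine classification. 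Writing $\Lambda=\Lambda_+-\Lambda_-$ and taking $\mathcal{L}_1,\mathcal{L}_2$ to be the CPPs with L\'evy measures $\Lambda_-,\Lambda_+$ respectively, and $\mathcal{L}$ the common underlying L\'evy law obtained by subtracting the corresponding CPP exponent from each $\psi^i$, delivers the announced decomposition $(X^i)_\star\PP^i=k_{q^i}(\mathcal{L}\star\mathcal{L}_i)$.

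The main obstacle will be the structural step in part \ref{thm:ii}: concluding from the boundedness of $D^1-D^2$ that $\psi^1-\psi^2$ has signed-CPP form requires verifying both that the Brownian covariances $\Sigma^i$ coincide (which I expect to get from computing the quadratic variation of the continuous part of $X^i|_{[0,T^i)}$, available thanks to $\PP^i(T^i>0)>0$) and that the L\'evy measures $\nu^i$ differ by a finite signed measure (which I expect to follow from applying the jump-compensation formula to $X^i$ on the pre-$T^i$ interval, with careful accounting for the possibly-missing jump at $T^i$ via $\mu^i_q$). A secondary subtlety is that $A(q,\xi)$ may vanish at some $\xi$, so solving for $\psi^1-\psi^2$ directly will need either a continuity/limit argument or testing against a sufficiently rich family of $(q,\xi)$.
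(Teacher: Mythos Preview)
Your route for \ref{thm:i} is correct and genuinely different from the paper's. The paper argues by \emph{concatenation}: it takes i.i.d.\ copies of $X^i|_{[0,T^i]}$ and glues them end to end, using the strong Markov property at $T^i$ to show that the glued process has the law of $X^i$; since the glued process is a measurable function of the observed data only, the two laws coincide. Your resolvent identity $(\star)$ is more analytic and bypasses that construction. The only loose end is the possible vanishing of $A(q,\xi)$: for fixed $\xi$ the map $q\mapsto A(q,\xi)$ is the Laplace transform of $t\mapsto\PP^i[e^{i\langle\xi,X^i_t\rangle};\,t<T^i\wedge\zeta^i]$, which tends to $\PP^i(T^i>0)>0$ as $t\downarrow 0$ and is hence not identically zero; analyticity then gives $A(q,\xi)\ne 0$ for all but countably many $q>0$, which is enough to solve for $q^i+\psi^i(\xi)$.

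Your argument for \ref{thm:ii} has a genuine gap. You assert that
\[
\phi^i_-(q,\xi)=\PP^i\!\bigl[e^{-qT^i}e^{i\langle\xi,X^i_{T^i-}\rangle};\,T^i<\zeta^i\bigr]
\]
is a functional of $X^i|_{[0,T^i)}$, but it is not: from the open-endpoint path one can detect $\{T^i>\zeta^i\}$ (the path shows $\partial$ before $T^i$), but one cannot distinguish $\{T^i<\zeta^i\}$ from $\{T^i=\zeta^i\}$. The observable left-limit quantity carries the indicator $\{T^i\le\zeta^i\}$ instead, and with that correction the residual at $\xi=0$ becomes $-\PP^i[e^{-qT^i};\,T^i=\zeta^i]$, which need not vanish. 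Hence your deduction $q^1=q^2$ collapses---and indeed that conclusion is \emph{false}: let $N$ be a Poisson process of rate $\lambda$, $S$ its first jump time, and $M$ the zero process killed at $S$; then $N|_{[0,S)}=0=M|_{[0,S)}$, both are L\'evy in the completed natural filtration of $N$ with the common stopping time $S$, yet $N$ has killing rate $0$ while $M$ has killing rate $\lambda$. The statement of \ref{thm:ii} deliberately allows $q_1\ne q_2$. Beyond this, the step ``$D^1-D^2$ bounded $\Rightarrow$ $\psi^1-\psi^2$ is a signed-CPP exponent'' is only heuristic; boundedness of a difference of L\'evy exponents does not by itself force that structure. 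The paper handles \ref{thm:ii} by first proving directly---via a jump-counting argument on the concatenated process together with a renewal bound on the number of copies visited by time $1$---that $\nu^1$ and $\nu^2$ differ by finite measures, and then extracting the continuous parts $X^{i,c}$ (which \emph{are} determined on $[0,T^i]$ by sample-path continuity) and feeding them back into \ref{thm:i}.
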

Before giving the proof of this theorem,  some (counter)examples and comments. 
%\begin{remark}
%In \ref{thm:i}, by $(X^1\vert_{[0,T]\cap [0,\infty)})_\star \PP$, which is an  abuse of notation, we mean the law, under $\PP$, of the process that is equal to $X^1$ on $[0,T]\cap [0,\infty)$ and equal to some adjoined extra state $\uparrow$ on $(T,\infty)$, and we mean it on the space $((\mathbb{R}^d\cup \{\partial,\uparrow\})^{[0,\infty)},(\mathcal{B}_{\mathbb{R}^d}\lor 2^{\{\partial,\uparrow\}})^{\otimes [0,\infty)})$. A similar remark pertains to \ref{thm:ii}.
%\end{remark}
\begin{example}
Even if, for $i\in \{1,2\}$, $T^i$ is finite $\PP^i$-a.s., there can be no hope of having just $(X^1_{T^1}\mathbbm{1}_{\{T^1<\infty\}},T^1)_\star \PP^1=(X^2_{T^2}\mathbbm{1}_{\{T^2<\infty\}},T^2)_\star \PP^2$ imply ${X^1}_\star\PP^1={X^2}_\star\PP^2$. Indeed, if, on a common probability space, $B$ is a linear Brownian motion, $\mathbf{0}$ is the the zero process, and $S$ is the first hitting time of $0$ by $B$ after time $1$, then a.s. $S<\infty$, $S$ is a stopping time of the completed natural filtration of $B$ in which both $B$ and $\mathbf{0}$ are L\'evy processes,  $B_S\mathbbm{1}_{\{S<\infty\}}=0=\mathbf{0}_S\mathbbm{1}_{\{S<\infty\}}$ a.s., yet of course $B$ and $\mathbf{0}$ do not have the same law. 
\end{example}
\begin{example}\label{remark:essential-1}
For \ref{thm:i} the stopping time property is essential. If $\mathbf{0}$ is the zero process and $N$ is a homogeneous Poisson process, both defined on a common probability space, then letting $S$ be the first jump time of $N$, one has $\mathbf{0}=0=N$ on $[0,S/2]\cap [0,\infty)$ and $S/2>0$ a.s., yet $\mathbf{0}$ and $N$ do not have the same law. 
\end{example}
\begin{example}\label{remark:essential-2}
Also for \ref{thm:ii} the stopping time property is essential. Indeed, by a result of Williams \cite[Theorem~55.9]{rogers}, for any given $c\in (0,\infty)$, on a common probability space, one may construct a Brownian motion with drift $-c$, $B^1$, a Brownian motion with drift $c$, $B^2$, and an exponentially distributed random time $\gamma$ of rate $2c$, such that $\gamma$ is independent of $B^1$, is equal to the time of the overall infimum of $B^2$, and with $B^1\vert_{[0,\gamma)}=B^2\vert_{[0,\gamma)}$. (Of course this is also another counterexample for \ref{thm:i}.)
\end{example}
\begin{example}
In \ref{thm:i}, even if $(\Omega^1,\GG^1,\PP^1)=(\Omega^2,\GG^2,\PP^2)$, $\FF^1=\FF^2$, and a.s.  $T^1=T^2$ \& $X^1=X^2$ on $[0,T^1]\cap [0,\infty)$, still the conclusion cannot be strengthened to a.s. equality. To exemplify this, take, on a common probability space, a standard one-dimensional  Brownian motion $B^1$, a random time $T$ independent of $B^1$, positive and finite with a positive probability, and let $B^2$ be got from $B^1$ by changing $B^1$ into an independent standard linear Brownian motion after time $T$. Then $B^1$ and $B^2$ are both standard univariate Brownian motions in their completed joint natural filtration of which $T$ is  a stopping time, they a.s. agree on $[0,T]\cap [0,\infty)$, but they are not a.s. equal.
\end{example}
\begin{example}
The conclusion of \ref{thm:ii} cannot be improved. For instance, if, on  a common probability space, $N$ is a homogeneous Poisson process, while $M$ is zero up to and then killed at the first jump time $S$ of $N$, then $N$ and $M$ are L\'evy processes in the completed natural filtration of $N$ of which $S$ is a stopping time, a.s. $N=0=M$ on $[0,S)$ and $S>0$, yet $N$ and $M$ ``differ by killing and by a compound Poisson process''. 
\end{example}
%\begin{remark}
%If $X^1$ and $X^2$ are possibly killed L\'evy processes in the same augmented filtration $\FF$ and $T'\in \mathcal{G}/\mathcal{B}_{[0,\infty]}$ is such that $\PP$-a.s. $X^1\vert_{[0,T')}=X^2\vert_{[0,T')}$, then $T:=\inf\{t\in (0,\infty):X^1_t\ne X^2_t\}$ is an $\FF$-stopping time and $\PP$-a.s. $X^1\vert_{[0,T)}=X^2\vert_{[0,T)}$.
%\end{remark}
%\begin{remark}
%The condition that $\PP$-a.s. $T\leq \zeta_1\land \zeta_2$ of \ref{thm:ii} is automatically met when $\zeta_1=\zeta_2=\infty$, i.e. when there is no killing. Besides, if $\zeta_1\land\zeta_2$ is an $\FF^1$- and an $\FF^2$-stopping time, then we may enforce the validity of this condition by replacing $T$ with $T\land \zeta_1\land \zeta_2$ (without affecting any of the other assumptions).
%\end{remark}
\begin{remark}\label{remark:random-walk}
The content of Theorem~\ref{theorem} makes sense also for a possibly killed  random walk (in the obvious interpretation of that qualification), but in that case it is trivial. Indeed, if $Z=(Z_n)_{n\in \mathbb{N}_0}$ is a possibly killed $\mathbb{R}^d$-valued random walk, in a filtration $\HH=(\HH_n)_{n\in \mathbb{N}_0}$, under a probability $\QQ$, and if $S$ is a stopping time of $\HH$ that is positive with a positive $\QQ$-probability, then $Z$ is independent under $\QQ$ of $\HH_0\ni \{S>0\}$. Therefore, if the $\QQ$-law of $Z\vert_{[0,S]\cap \mathbb{N}_0}$ is known, then for any $f\in \mathcal{B}_{\mathbb{R}^d}/\mathcal{B}_{[0,\infty]}$ extended by $0$ on $\{\partial\}$, the quantity $\QQ[f(Z_1)]\QQ(S>0)=\QQ[f(Z_1);S>0]=\QQ[f(Z_1^S);S>0]$, and hence the $\QQ$-law of $Z$ is known. On the other hand the knowledge of the $\QQ$-law of $Z\vert_{[0,S)\cap \mathbb{N}_0}$ clearly need not determine the $\QQ$-law of $Z$ at all, since one can take $S=1$.
\end{remark}
\begin{remark}
\ref{thm:i} implies that, for any killed (lifetime $\zeta<\infty$ a.s.) L\'evy process $X$ in a filtration $\FF$ under a measure $\PP$, and any $\FF$-stopping time $S$ with $\PP(S>0)>0$, one has $\PP(S\geq \zeta)>0$.  
\end{remark}
We turn now to the
\begin{proof}[Proof of Theorem~\ref{theorem}.]
Let $i\in \{1,2\}$. Replacing  both $T^i$ with $T^i\land 1$ if necessary we may assume each $T^i$ is finite.  Then take the product space $(\Omega^{i,\infty},\GG^{i,\infty},\PP^{i,\infty}):=((\Omega^i)^\mathbb{N},(\GG^i)^{\otimes \mathbb{N}},\times_\mathbb{N}\PP^i))$. Set $X^{i,n}:=X^i\circ \pr_n$  and $T_n^i:=T^i\circ \pr_n$ for $n\in \mathbb{N}$. By the law of large numbers, since $\PP^i(T^i>0)>0$, and discarding a negligible set if necessary, we may assume that $S_n^i:=\sum_{k=1}^nT_k^i\uparrow \infty$ as $n\to\infty$ over $\mathbb{N}_0$. 

Next we define the process $Y^{i,n}=(Y^{i,n}_t)_{t\in [0,\infty)}$ on $\Omega^{i,\infty}$, with $n\in \mathbb{N}$, as follows: $Y^{i,1}:=X^{i,1}$ and then inductively, for $n\in \mathbb{N}$, $Y^{i,n+1}=Y^{i,n}$ on $[0,S_n^i)$, while $Y^{i,n+1}=Y^{i,n}_{S_n^i}+X^{i,n+1}_{\cdot-S_n^i}$ on $[S_n^i,\infty)$. In words, still for $n\in \mathbb{N}$, $Y^{i,n}$ starts at $X^{i,1}_0$ and then, up to hitting $\partial$, for $k\in \{1,\ldots,n-1\}$, the increments of $Y^{i,n}$ on $[S_{k-1}^i,S_k^i]$ are those of $X^{i,k}$ on $[0,T_k^i]$, while on $[S_n^i,\infty)$, they are those of $X^{i,n}$ on $[0,\infty)$. %And still less formally: for $i\in \{1,2\}$ one starts with (a copy) of $X^1$, $X (together with the associated copy of $T$) and then on each iteration, once the independent copy of the random time $T$ has elapsed, replaces the remainder of the path with an independent copy of $X^i$ (together with . 

It is then clear that, as $n\to\infty$, the $Y^{i,n}$ are converging pointwise  to a process, that we denote by $Y^i$; we claim furthermore, that for each $n\in \mathbb{N}$, $Y^{i,n}$ (and therefore, in the limit, $Y^i$) has the same law under $\PP^{i,\infty}$ as does $X^i$ under $\PP^i$. 

 %, from which the claim will follow, because the one-dimensional distributions determine the law of $X^i$.

We need only prove the latter for ``$i=1$'' (it is the same for ``$i=2$''); and then we drop, in the next paragraph only, the superscript ``$i=1$'' to ease the notation.% (we will not need the limits $Y^i$, $i\in \{1,2\}$, in the next paragraph, so no confusion will arise). 

Take then $\{n,k\}\subset \mathbb{N}$, $\{g_1,\ldots,g_k\}\subset \mathcal{B}_{\mathbb{R}^d}/\mathcal{B}_{[0,\infty]}$ extended by zero on $\{\partial\}$, and real numbers $0=t_0<\cdots <t_k$; we are to show that 
 $\PP[g_1(X_{t_1})\cdots g_k(X_{t_k})]=\PP^\infty[g_1(Y^{n}_{t_1})\cdots g_k(Y^{n}_{t_k})]$. We compute:  $$\PP^\infty[g_1(Y^{n+1}_{t_1})\cdots g_k(Y^{n+1}_{t_k})]$$ \footnotesize
$$=\PP^\infty[g_1(Y^{n}_{t_1})\cdots g_k(Y^{n}_{t_k});t_k\leq S_n]+\sum_{l=1}^k\PP^\infty[g_1(Y^{n}_{t_1})\cdots g_{l-1}(Y^n_{t_{l-1}}) g_l(Y^{n+1}_{t_l})\cdots g_k(Y^{n+1}_{t_k});t_{l-1}\leq S_n< t_l]$$\normalsize
$$=\PP^\infty[g_1(Y^{n}_{t_1})\cdots g_k(Y^{n}_{t_k});t_k\leq S_n]+$$
$$\sum_{l=1}^k\PP^\infty[g_1(Y^{n}_{t_1})\cdots g_{l-1}(Y^n_{t_{l-1}}) g_l(Y^{n}_{S_n}+X^{n+1}_{t_l-S_n})\cdots g_k(Y^{n}_{S_n}+X^{n+1}_{t_k-S_n});t_{l-1}\leq S_n< t_l,T_n<\zeta_n]$$\normalsize
$$=\PP^\infty[g_1(Y^{n}_{t_1})\cdots g_k(Y^{n}_{t_k});t_k\leq S_n]+$$\footnotesize
$$\sum_{l=1}^k\PP^\infty[g_1(Y^{n}_{t_1})\cdots g_{l-1}(Y^n_{t_{l-1}}) g_l(Y^{n}_{S_n}+(\Delta_{T_n}X^n)_{t_l-S_n})\cdots g_k(Y^{n}_{S_n}+(\Delta_{T_n}X^n)_{t_k-S_n});t_{l-1}\leq S_n< t_l,T_n<\zeta_n]$$\normalsize
$$=\PP^\infty[g_1(Y^{n}_{t_1})\cdots g_k(Y^{n}_{t_k});t_k\leq S_n]+\sum_{l=1}^k\PP^\infty[g_1(Y^{n}_{t_1})\cdots g_{l-1}(Y^n_{t_{l-1}}) g_l(Y^{n}_{t_l})\cdots g_k(Y^{n}_{t_k});t_{l-1}\leq S_n< t_l]$$
$$=\PP^\infty[g_1(Y^{n}_{t_1})\cdots g_k(Y^{n}_{t_k})],$$
where crucially in the third equality: we  used (I) the strong Markov property for $X$ at time $T$, plus the various independences coming from the construction of $\PP^\infty$, to establish that under $\PP^\infty$, conditionally on $\{T_n<\zeta_n\}$, the process $\Delta_{T_n}X^n$ has the same law as $X^{n+1}$ and is, like $X^{n+1}$, independent of $(X^1,\ldots,X^{n-1},(X^n)^{T_n},T_1,\ldots,T_n)$; noting that (II) $Y^n_{S_n}=X^1_{T_1}+\cdots+X^n_{T_n}$, $S_n=T_1+\cdots+T_n$, and for each $l\in \{1,\ldots,k\}$ and $m\in \{1,\ldots,l-1\}$, $g_m(Y^n_{t_m})\mathbbm{1}_{\{t_{l-1}\leq S_n\}}$ are all measurable w.r.t. $\sigma(X^1,\ldots,X^{n-1},(X^n)^{T_n},T_1,\ldots,T_n)$: it is only not obvious for the latter -- to check it, write $Y^n_{t_m}\mathbbm{1}_{\{t_{l-1}\leq S_n\}}=\sum_{w=1}^nY^n_{t_m}\mathbbm{1}_{\{S_{w-1}<t_m\leq S_w,t_{l-1}\leq S_n\}}=\sum_{w=1}^n(X^1_{T_1}+\cdots +X^{w-1}_{T_{w-1}}+(X^w)^{T_w}_{t_m-S_{w-1}})\mathbbm{1}_{\{S_{w-1}<t_m\leq S_w,t_{l-1}\leq S_n\}}$.\footnote{It is tempting to think that one could somehow bypass the strong Markov property and still prove that $Y_\star\PP^\infty=X_\star\PP$ without assuming that $T$ is an $\FF$-stopping time. But it is false. For instance, if, under $\PP$, $X$ is a linear Brownian motion with strictly negative drift, and if $T$ is the last time that $X$ is at $0$, then under $\PP^\infty$, $0$ is recurrent for the process $Y$, while it is transient for the process $X$ under $\PP$.} An inductive argument allows to conclude.

%We now claim that we may find, on a suitable probability space $(\Theta,\mathcal{H},\QQ)$, L\'evy processes $Y^1$ and $Y^2$ and a sequence of random times $(T_k)_{k\in \mathbb{N}}$, such that (O) the $T_k$, $k\in \mathbb{N}$, are i.i.d. under $\QQ$ and  ${T_1}_\star\QQ=T_\star \PP$, (I) %$S_k:=\sum_{l=0}^kT_l\uparrow \infty$ as $k\to \infty$, (ii)
%$Y^1$ and $Y^2$ have the   distributions under $\QQ$ of $X^1$ and $X^2$ under $\PP$, respectively, and (II) with $S_n:=\sum_{l=0}^nT_l$ for $n\in \mathbb{N}_0$, for each $k\in \mathbb{N}$ and $i\in \{1,2\}$, $((Y^i_{\cdot+S_{k-1}}-Y^i_{S_{k-1}})\vert_{[0,T_k]})_\star\QQ=(X^i\vert_{[0,T]})_\star\PP$ and besides $S_n\uparrow \infty$ as $n\to\infty$.

%((Y^2_{\cdot+S_{k-1}}-Y^2_{S_{k-1}}) \vert_{[0,T_k)})_\star \QQ$ %$Y^1-Y_{S}\vert_{}$ and $Y^2$ have the same increments on each of the intervals $[S_{k-1},S_k)$, $k\in \mathbb{N}$, 
%and besides we have that $S_n\uparrow \infty$ as $n\to\infty$. %, (iv) the random variables $(Y^{2}_{S_k}-Y^1_{S_k-})\mathbbm{1}_{\{S_{k-1}<S_k\}}$ are i.i.d. a
As we have noted, this now establishes that, for $i\in \{1,2\}$, ${Y^i}_\star\PP^{i,\infty }={X^i}_\star\PP^i$; it will also be helpful to keep in mind that, up to hitting $\partial$, for $k\in \mathbb{N}$, the increments of $Y^{i}$ on $[S_{k-1}^i,S_k^i]$ are those of $X^{i,k}$ on $[0,T_k^i]$.

\ref{thm:i}. $Y^1$ and $Y^2$ are seen to be the same measurable transformation of the sequences  $((X^{i,n}_t)_{t\in [0,T_n^i]})_{n\in \mathbb{N}}$ with $i=1$ and $i=2$, respectively. Furthermore, for $i\in \{1,2\}$, under $\PP^{i,\infty}$, by construction, the sequence $((X^{i,n}_t)_{t\in [0,T_n^i]})_{n\in \mathbb{N}}$ consists of i.i.d. random elements. Besides, $((X^{1,1}_t)_{t\in [0,T_1^1]})_\star \PP^{1,\infty}=((X^{1}_t)_{t\in [0,T^1]})_\star \PP^{1}=((X^{2}_t)_{t\in [0,T^2]})_\star \PP^{2}=((X^{2,1}_t)_{t\in [0,T_1^2]})_\star \PP^{2,\infty}$ by the assumption of \ref{thm:i}. Therefore ${X^1}_\star\PP^1={Y^1}_\star\PP^{1,\infty }={Y^2}_\star\PP^{2,\infty}={X^2}_\star\PP^2$.

\ref{thm:ii}. Let $i\in \{1,2\}$ and denote by $\zeta^i$ the lifetime of $X^i$. Replacing $T^i$ with $T^i\land \zeta^i=\inf\{t\in (0,T^i):X^i_t=\partial\}\land T^i$ if necessary we may assume that $T^i\leq \zeta^i$; then (possibly by enlarging the underlying space and filtration) we may assume that $\zeta^i=\infty$. %Because of the assumption that $\PP$-a.s. $T\leq \zeta_1\land\zeta_2$ we may assume, possibly by enlarging the underlying space and filtrations, that $X^1$ and $X^2$ have infinite lifetimes. %We may also assume $T$ is an $\FF$-stopping time: replacing it by the time $\inf\{t\in (0,\infty):X_t^1\ne X_t^2\}$ and augmenting $\FF$ if necessary; and then that it is a finite $\FF$-stopping time. 
%We know from the construction in the previous part that we may find, on a suitable probability space $(\Theta,\mathcal{H},\QQ)$, L\'evy processes $Y^1$ and $Y^2$ and a sequence of random times $(T_k)_{k\in \mathbb{N}}$, such that (O) the $T_k$, $k\in \mathbb{N}$, are i.i.d. under $\QQ$ and  ${T_1}_\star\QQ=T_\star \PP$, (I) %$S_k:=\sum_{l=0}^kT_l\uparrow \infty$ as $k\to \infty$, (ii)
%$Y^1$ and $Y^2$ have the   distributions under $\QQ$ of $X^1$ and $X^2$ under $\PP$, respectively, and (II) with $S_n:=\sum_{l=0}^nT_l$ for $n\in \mathbb{N}_0$, for each $k\in \mathbb{N}$, $((Y^1_{\cdot+S_{k-1}}-Y^1_{S_{k-1}})\vert_{[0,T_k)})_\star\QQ=((Y^2_{\cdot+S_{k-1}}-Y^2_{S_{k-1}}) \vert_{[0,T_k)})_\star \QQ$ %$Y^1-Y_{S}\vert_{}$ and $Y^2$ have the same increments on each of the intervals $[S_{k-1},S_k)$, $k\in \mathbb{N}$, 
%and besides we have that $S_n\uparrow \infty$ as $n\to\infty$. %, (iv) the random variables $(Y^{2}_{S_k}-Y^1_{S_k-})\mathbbm{1}_{\{S_{k-1}<S_k\}}$ are i.i.d. 
Let next $\nu^i$ be the L\'evy measure of $X^i$ under $\PP^i$ (and hence of $Y^i$ under $\PP^{i,\infty}$). Set $\gamma:=\nu^1+\nu^2$, $f^i:=\frac{d\nu^i}{d\gamma}$ for $i\in \{1,2\}$, and $\nu:=(f^1\land f^2)\cdot \gamma$. We check that (A) $\nu^i-\nu$, $i\in\{1,2\}$, are finite measures. 

Suppose per absurdum, and then without loss of generality, that $\nu^{1\prime}:=\nu^1-\nu=(f^1-f^1\land f^2)\cdot \gamma$ is infinite. The measure $\nu^{1\prime}$ is locally finite in $\mathbb{R}^d\backslash \{0\}$, hence there exists a sequence $(A_k)_{k\in \mathbb{N}}$ in $\mathcal{B}_{\mathbb{R}^d\backslash \{0\}}$ such that $A_k\subset \{f^1>f^1\land f^2\}=\{f^1>f^2\}$ for each $k\in \mathbb{N}$ and such that $[0,\infty)\ni \nu^{1\prime}(A_k)=\nu^1(A_k)-\nu^2(A_k)\uparrow \infty$ as $k\to \infty$. For $k\in \mathbb{N}$ and $i\in \{1,2\}$, set $\xi_k^i$ equal to the number of jumps of $Y^i$ during the time interval $(0,1]$ that fall into the Borel set $A_k$; then  $\xi_k^i\sim_{\PP^{i,\infty}} \mathrm{Pois}_{\mathbb{N}_0}(\nu^i(A_k))$ and in particular $\PP^{i,\infty}[\xi_k^i]=\nu^i(A_k)$. Consequently, for each $k\in \mathbb{N}$, $\PP^{1,\infty}[\xi_k^1]-\PP^{2,\infty}[\xi_k^2]=\nu^1(A_k)-\nu^2(A_k)$, which is $\uparrow \infty$ as $k\to \infty$. On the other hand, setting $N:=\sum_{k\in \mathbb{N}}\mathbbm{1}_{\{S_k^1\leq  1\}}$, it is clear by construction of the processes $Y^1$ and $Y^2$ and from $(X^1\vert_{[0,T^1)})_\star\PP^1=(X^2\vert_{[0,T^2)})_\star \PP^2$, that $\PP^{1,\infty}[\xi_k^1]-\PP^{2,\infty}[\xi_k^2]\leq \PP^{1,\infty}[N]$ for all $k\in\mathbb{N}$. At the same time, the $T_k^1$, $k\in \mathbb{N}$, are i.i.d. under $\PP^{1,\infty}$, hence by renewal theory \cite{renewal}, since $\PP^{1,\infty}(T_1^1>0)=\PP^1(T^1>0)>0$, it follows that $\PP^{1,\infty}[N]<\infty$, a contradiction.

Next, by the L\'evy-It\^o decomposition one can write, for each $i\in \{1,2\}$, $\PP^i$-a.s.: $$X^i_t=B^i_t+\Gamma^it+\lim_{\epsilon \downarrow 0}\int_{(0,t]\times B(\epsilon,1]}x[J^i(ds,dx)-ds\nu^i(dx)]+\int_{(0,t]\times B(1,\infty)}xJ^i(ds,dx),\quad t\in [0,\infty),$$
for a $d$-dimensional (possibly non-standard, of course) $\PP^i$-Brownian motion $B^i$, a $\Gamma^i\in \mathbb{R}^d$, and with $J^i$ being the Poisson random measure of the jumps of $X^i$. Furthermore, by (A), the limit $L:=\lim_{\epsilon\downarrow 0}\left[\int_{B(\epsilon,1]}x\nu^1(dx)-\int_{B(\epsilon,1]}x\nu^2(dx)\right]$ is well-defined in $\mathbb{R}^d$. Set $X^{1,c}_t:=B^1_t+\Gamma^1t$  and $X^{2,c}_t:=B^2_t+\Gamma^2t+Lt$ for $t\in [0,\infty)$. Then, with $i\in \{1,2\}$, on the time interval $[0,T^i)$, the processes $X^{i,c}$ can be extracted from the processes $X^i$ by the same measurable transformation (because this is true of the jumps). Therefore $(X^{1,c}\vert_{[0,T^1)})_\star\PP^1=(X^{2,c}\vert_{[0,T^2)})_\star\PP^2$ and hence by sample-path continuity $(X^{1,c}\vert_{[0,T^1]\cap [0,\infty)})_\star\PP^1=(X^{2,c}\vert_{[0,T^2]\cap [0,\infty)})_\star\PP^2$. Now, $X^{1,c}$ and $X^{2,c}$ are still L\'evy processes in the filtrations $\FF^1$ and $\FF^2$, respectively. Thus, by part \ref{thm:i}, (B) ${X^{1,c}}_\star \PP^1 ={X^{2,c}}_\star\PP^2$.

Combining (A)-(B), by the independence between the ``jump'' and ``continuous'' part present in the L\'evy-It\^o decomposition, the desired conclusion follows. 
\end{proof}
%Concerning part \ref{thm:ii} we would make the following
%\begin{conjecture}
%If two possibly killed L\'evy processes, defined on a common probability space, agree in law up to (perhaps only strictly before; it amounts to the same statement) a random time that is positive (perhaps only with a positive probability), then they are equal in law up to compound Poisson components, drift and killing (and perhaps even just up to compound Poisson components and killing).
%\end{conjecture}
%Establishing the correct variation of the latter will likely require a technique different from the one we have seen above; it is left open to future research.

\begin{remark}\label{remark:markov}
The proof of Theorem~\ref{theorem}\ref{thm:i} can be tweaked to handle the case of continuous-time Markov chains, though the result is less definitive in this context.  Let us look at this in more detail. 

Fix a countable set $E$  -- it will be the state space; fix also --- it will be the cemetery state --- a $\partial\notin E$. Recall then that a process $X=(X_t)_{t\in [0,\infty)}$, defined on a measurable space $(\Omega,\mathcal{G})$, is a (minimal) continuous-time $E$-valued Markov chain, in a filtration $\FF=(\FF_t)_{t\in [0,\infty)}$ with $\FF_\infty\subset \GG$, under the probabilities $\PP=(\PP_x)_{x\in E}$ on $(\Omega,\mathcal{G})$, provided: (1) $X$ takes  values in $E\cup \{\partial\}$, endowed with the discrete topology and measurable structure, and it is $\FF$-adapted; (2) $X$ has paths that are right-continuous, $X$ is $E$-valued on $[0,\zeta)$, and $X=\partial$ on $[\zeta,\infty)$, where $\zeta:=\lim_{n\to\infty}J_n$, $(J_n)_{n\in \mathbb{N}}$ being the sequence of the consecutive jump times of $X$; (3) $\PP_x(X_0=x)=1$ for all $x\in E$; (4) for $\{t,s\}\subset [0,\infty)$, $x\in E$, and $g\in 2^E/\mathcal{B}_{[0,\infty]}$ extended by $0$ on $\{\partial\}$, one has $\PP_x[g(X_{t+s})\vert \FF_t]=\PP^{X_t}[g(X_s)]$ a.s.-$\PP_x$ on $\{t<\zeta\}$. As is well-known, such an $X$ then has the strong Markov property: for any $\FF$-stopping time $T$, $x\in E$ and $g\in (2^{E\cup \{\partial\}})^{\otimes [0,\infty)}/\mathcal{B}_{[0,\infty]}$, $\PP_x[g(X_{T+\cdot})\vert \FF_T]=\PP^{X_T}[g(X)]$ a.s.-$\PP_x$ on $\{T<\zeta\}$.

%As usual for a probability measure $\mu$ on $\mathcal{B}_E\lor \{\partial\}$ we define $\PP^\mu=\int \mu(dx)\PP_x(\cdot)$ \ldots

Suppose then that the system $(\Omega,\GG,\FF,\PP,X)$ constitutes such a continuous-time Markov chain and let $T$ be an $\FF$-stopping time. Take the measure $\PP^\infty:=\times_{n\in \mathbb{N}}\times_{x\in E}\PP_x$ on $(\prod_{n\in \mathbb{N}}\prod_{x\in E}\Omega,(\mathcal{G}^{\otimes E})^{\otimes \mathbb{N}})$ and let $X^{x,n}:=X\circ\pr_x\circ \pr_n$, $T^{x,n}:=T\circ\pr_x\circ \pr_n$ for $x\in E$, $n\in \mathbb{N}$ (in words, we take denumerably many independent copies of $X$ for each starting position). Additionally set $X^{\partial,n}\equiv \partial$ and $T^\partial\equiv\infty$ for $n\in \mathbb{N}$. Fix next an $x\in E$. Define $Y^{x,1}:=X^{x,1}$, $S_1^x:=T^{x,1}$, and then recursively, for $n\in \mathbb{N}$, $Y^{x,n+1}:=Y^{x,n}$ on $[0,S_n^x)$, $Y^{x,n+1}:=X^{Y^{x,n}_{S_n^x},n+1}_{\cdot- S_n^x}$ on $[S_n^x,\infty)$, and $S_{n+1}^x:=S_n^x+T^{Y^{x,n}_{S_n^x},n+1}$. (Note the denumerable state space ensures suitable measurability of these objects and it ensures that $\PP^\infty$-a.s. $Y^{x,n+1}_{S_n^x}=Y^{x,n}_{S_n^x}$ for all $n\in \mathbb{N}$.) Set furthermore $S^x:=\lim_{n\to\infty}S_n^x$ and assume that $\PP^\infty(S^x=\infty)=1$. (It would be an interesting question in its own right to investigate under which conditions does $\PP^\infty(S^x=\infty)=1$ in fact obtain, however this will not be pursued here.)
%\begin{remark}
%It would be an interesting to investigate under which conditions does $\PP^\infty(S^x=\infty)=1$ in fact obtain.
%\end{remark}
%
%
%\begin{remark}
%It would be an interesting to investigate under which conditions does $\PP^\infty(S^x=\infty)=1$ in fact obtain. This question appears to be non-trivial. For instance, irrespective of the assumption just made, define $f(y):=\PP^\infty(S^y=\infty)$  and $P_Tg(y):=\PP^y[g(X_T);T<\infty]$ for $y\in E$ and $g\in \mathcal{B}_E/\mathcal{B}_{[0,\infty]}$. Then, true, by the strong Markov property, for $x\in E$,  $f(x)=\PP_x(T=\infty)+\PP_x[\PP^\infty[S^y=\infty]\vert_{y=X_T};T<\infty]=\PP_x(T=\infty)+\PP_x[f(X_T);T<\infty]$. Hence  $f$ is a solution to $ g=\PP^\cdot(T=\infty)+P_Tg$ in $g\in \mathcal{B}_E/\mathcal{B}_{[0,1]}$, but in general, clearly, it is neither the maximal nor the minimal solution. Indeed, $g=1$ is always a solution and if $\PP^\cdot(T=\infty)=0$, then $g=0$ is also always a solution. In particular, the foregoing is only potentially informative if $\PP^y(T=\infty)>0$ for at least one $y\in E$.
%\end{remark}
It is then clear that the $Y^{x,n}$ converge $\PP^\infty$-a.s. to a process $Y^x$ as $n\to\infty$. Furthermore, using the strong Markov property, similarly to how we did in the proof of the L\'evy case, we may show that, for each $n\in \mathbb{N}$, $Y^{x,n}$, and hence in the limit $Y^x$, has the same law under $\PP^\infty$ as does $X$ under $\PP_x$. We leave the (grantedly more tedious when compared to the L\'evy case) details of these computations to the interested reader. 

As a consequence of the preceding we obtain then, just as in the proof of Theorem~\ref{theorem}\ref{thm:i}, the statement:

\begin{quote}
For $i\in \{1,2\}$, let $(\Omega^i,\GG^i,\FF^i,\PP^i,X^i)$ be a continuous-time $E$-valued Markov chain in the sense made precise above, and let $T^i$ be an $\FF^i$-stopping time; associate to it $\PP^{i,\infty}$ and the times $(S^{i,x})_{x\in E}$ in the obvious way, as above. Assume $(X^1\vert_{[0,T^1]\cap [0,\infty)})_\star\PP^{1}_x=(X^2\vert_{[0,T^2]\cap [0,\infty)})_\star\PP^{2}_x$ for all $x\in E$. Let $x\in E$. If $\PP^{1,\infty}(S^{1,x}=\infty)=1$  (and hence $\PP^{2,\infty}(S^{2,x}=\infty)=1$), then ${X^1}_\star \PP^1_x={X^2}_\star \PP^2_x$.
\end{quote}
The case of discrete-time Markov chains is again trivial in this context (cf. Remark~\ref{remark:random-walk}). The analogue of Theorem~\ref{theorem}\ref{thm:ii} is of no interest in the context of Markov chains.
\end{remark}

\begin{example}\label{example}
 We close this paper with an example in the context of self-similar Markov processes  in which Theorem~\ref{theorem} produces non-trivial information. 

To this end, let $X=(X_t)_{t\in [0,\infty)}$ be a one-dimensional stable L\'evy process under the probabilities $(\PP_x)_{x\in \mathbb{R}}$ in a filtration $\FF=(\FF_t)_{t\in [0,\infty)}$ satisfying the usual hypotheses. We make the standing assumption that $X$ has jumps of both signs in order to avoid triviality, and refer the reader to \cite[Chapter~13]{kyprianou} \cite{alea} \cite[Section~2]{deep-1} for any unexplained terminology and facts that we shall state without proof below: introducing everything properly here would not be consistent with the scope of this paper.

We consider the  following two processes: $Y$, which is $X$ sent to $0$ on hitting $(-\infty,0]$ (and then stopped); and  $Z$, which is $X$ sent to $0$ on hitting $0$ (and then stopped). It is  then well-known that $Y$ is a positive self-similar Markov process under the probabilities $(\PP_x)_{x\in (0,\infty)}$ and that $Z$ is a real self-similar Markov process under the probabilities $(\PP_x)_{x\in \mathbb{R}\backslash \{0\}}$, both in the filtration $\FF$. Moreover, defining $T_0:=\inf \{s\in (0,\infty):Y_s=0\}$, $R_0:=\inf \{s\in (0,\infty):Z_s=0\}$,  $$\tau_t:=\inf\left\{s\in (0,R_0):\int_0^s\vert Z_v\vert^{-\alpha}dv>t\right\}\land R_0$$ and $\GG_t:=\FF_{\tau_t}$ for $t\in [0,\infty)$, we have as follows:
\begin{enumerate}[(1)]
\item Put $\gamma:=\int_0^{R_0}\vert Z_v\vert^{-\alpha}dv$ and define the processes $\mu=(\mu_t)_{t\in [0,\infty)}$ and $J=(J_t)_{t\in [0,\infty)}$, by setting $\mu_t:=\log(\vert Z_{\tau_t}\vert)\text{ and }J_t:=\sgn(Z_{\tau_t})$ for $t\in (0,\gamma),$
$\mu$ and $J$ being killed on the time-interval $[\gamma,\infty)$. Then \cite{rivero} $(\mu,J)$ is a possibly killled Markov additive process (MAP) under the probabilities $(\PP_{je^x})_{(x,j)\in \mathbb{R}\times \{-1,1\}}$ in the filtration $(\GG_t)_{t\in [0,\infty)}$. This is known as the Lamperti-Kiu transform. Further, let $(G,M)$ be the ascending ladder MAP of $(\mu,J)$ with associated local time at the maximum $L$; then, under $\PP_1$, $G$ killed at the first time $M$ changes its sign is a killed subordinator in the filtration $(\GG_{L^{-1}_s})_{s\in [0,\infty)}$. It is the killed L\'evy process $H^1$ which describes the movement of $G$ as long as the modulating chain $M$ is in state $1$.
\item Similarly put $\zeta:=\int_0^{T_0}Y_v^{-\alpha}dv$  and define the process $\xi=(\xi_t)_{t\in [0,\infty)}$ by setting $\xi_t:=\log(Y_{\tau_t})$ for $t\in (0,\zeta),$
$\xi$ being killed on the time-interval $[\zeta,\infty)$. Then  \cite{lamperti} $\xi$ is a possibly killled L\'evy process in the filtration $(\GG_t)_{t\in [0,\infty)}$ under the probabilities $(\PP_{e^x})_{x\in\mathbb{R}}$. This is known as the Lamperti transform. Further, let $H$ be the ascending ladder height process of $\xi$ under a normalization of the local time at the maximum that is consistent with that of $L$; then, under $\PP_1$, $H$ is a killed subordinator in the filtration $(\GG_{L^{-1}_s})_{s\in [0,\infty)}$.
%In particular, letting $R$ be the time of the first crossing of the origin by $Z$, then, under the probabilities $(\PP_{e^x})_{x\in \mathbb{R}}$, the process $\mu$ killed 
\end{enumerate}

Besides, it is clear from the pathwise construction of $Y$ and $Z$, that $H$ and $H^1$ agree on $[0,L_{\zeta})$ with $L_{\zeta}$ being a stopping time of $(\GG_{L^{-1}_s})_{s\in [0,\infty)}$. Indeed $L_{\zeta}$ is the lifetime of $H$, and albeit it is not the lifetime of $H^1$, it is certainly majorized by the latter. 

It follows then from Theorem~\ref{theorem} that the laws of $H$ and $H^1$ differ only by killing and compound Poisson components, which yields an a priori insight into the non-trivial probabilistic structure of the MAP $(G,M)$ in terms of the much simpler object $H$. (A fully explicit description of the  law of $(G,M)$, viz. of the MAP exponent of $(G,M)$, is non-trivial, see \cite{deep-1}.) 
\end{example}
%Let next, under $\PP_1$, $H$ be the ascending ladder height process of $\xi$ with associated local time at the maximum $L$. Similarly, again under $\PP_1$, let $G$ be the ascending ladder height process of $\mu$ with associated local time at the maximum $K$. Then, under $\PP_1$, $H$ is a possibly killed subordinator in the filtration $(\GG_{L^{-1}_s})_{s\in [0,\infty)}$ and 

% with stability index $\alpha$ and positivity parameter $\rho=\PP(X_1\geq 0)$.

%\section{Markov chains}

\bibliographystyle{plain}
\bibliography{Biblio_filtrations}
\end{document}